\newcommand{\mexpect}[2]{\mathbb{E}_{#1}\left[ #2\right]}
\newcommand{\norm}[1]{\left\Vert #1 \right\Vert}
\newcommand{\oo}[1]{\left( #1 \right)}
\newcommand{\cc}[1]{\left[ #1 \right]}
\newcommand{\argmin}{\mathop{\arg\min}}
\newtheorem{theorem}{Theorem}
\newtheorem{incorrect_theorem}{Incorrect Theorem}
\title{Differentiable Integer Programming is not Differentiable\\ \& it's not a mere technical problem.}
\author{
Thanawat Sornwanee\thanks{The author thanks Yunzhi Zhang for useful discussions.}\\
\small Stanford University\\
\texttt{tsornwanee@stanford.edu}
}
\date{25 January 2026}
\begin{document}

\maketitle

\begin{abstract}
We show how the differentiability method employed in the paper ``Differentiable Integer Linear Programming'', Geng, et al., 2025~\cite{geng2025differentiable} as shown in its theorem 5 is incorrect. Moreover, there already exists some downstream work that inherits the same error.

The underlying reason comes from that, though being continuous in expectation, the surrogate loss is discontinuous in almost every realization of the randomness, for the stochastic gradient descent.

\end{abstract}

The paper ``Differentiable Integer Linear Programming", Geng, et. al., 2025~\cite{geng2025differentiable} suggests a reformulation of an integer linear program from
\begin{align}
\label{eq:firstproblem}
    \min_{x \in \{0,1\}^d: Ax \preccurlyeq b} c^\top x, 
\end{align}
where $A \in \mathbb{R}^{m \times d}$, $b \in \mathbb{R}^m$, and $c \in \mathbb{R}^d$ for some $m,d \in \mathbb{N}$,
to
\begin{align}
\label{eq:secondproblem}
    \min_{\hat{x} \in [0,1]^d: \sum_{i=1}^m\mexpect{x \sim P_{\hat{x}}}{\max\oo{\left\{a_i^\top x-b_i, 0\right\}}} = 0} c^\top \hat{x},
\end{align}
where the distribution $P_{\hat{x}} := \bigotimes_{i=1}^d \text{Bernoulli}\oo{\hat{x}_i}$ for all $\hat{x} \in [0,1]^d$. This probabilistic reparameterization method is commonly used to relax the integer nature of combinatorial problem~\cite{karalias2020erdos}, while still preserving the correctness of the result. 

The paper itself~\cite{geng2025differentiable} outlines the result into 2 theorems (theorem 1 and theorem 2 of the paper). We summarize the main result of the 2 theorems as the following theorem.
\begin{theorem}
    [Summarization of Theorem 1 and Theorem 2 of~\cite{geng2025differentiable}]
    \begin{align*}
        \argmin_{x \in \{0,1\}^d: Ax \preccurlyeq b} c^\top x = \cc{\argmin_{\hat{x} \in [0,1]^d: \sum_{i=1}^m\mexpect{x \sim P_{\hat{x}}}{\max\oo{\left\{a_i^\top x-b_i, 0\right\}}} = 0} c^\top \hat{x}} \cap \{0,1\}^d
    \end{align*}
\end{theorem}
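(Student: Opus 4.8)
The plan is to reduce the claimed identity of $\argmin$ sets to three facts about the feasible regions and the optimal values of the two programs. Write $S_1 := \{x\in\{0,1\}^d : Ax\preccurlyeq b\}$ for the feasible set of~\eqref{eq:firstproblem}, and $S_2$ for the feasible set of~\eqref{eq:secondproblem}, i.e.\ the set of $\hat{x}\in[0,1]^d$ satisfying the expectation constraint. First I would characterize $S_2$. Since $\max(\cdot,0)\ge 0$, the constraint sum is a sum of nonnegative expectations of nonnegative quantities, so it equals $0$ if and only if $\max(a_i^\top x - b_i,0)=0$ for every $i$ and $P_{\hat{x}}$-almost every $x$; that is, every $x$ in the support of $P_{\hat{x}}$ is feasible for~\eqref{eq:firstproblem}. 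Because $P_{\hat{x}}=\bigotimes_i\mathrm{Bernoulli}(\hat{x}_i)$ is a product measure supported on $\{0,1\}^d$, its support is the product of the supports of its marginals; hence $\mathrm{supp}(P_{\hat{x}})\subseteq S_1$ whenever $\hat{x}\in S_2$, and when $\hat{x}\in\{0,1\}^d$ the measure $P_{\hat{x}}$ degenerates to the point mass $\delta_{\hat{x}}$, so the constraint reads exactly $A\hat{x}\preccurlyeq b$ and thus $\hat{x}\in S_2\iff\hat{x}\in S_1$. This already gives $S_2\cap\{0,1\}^d=S_1$.

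Next I would match the optimal values by a domination argument. Fix $\hat{x}\in S_2$. By linearity of expectation together with $\mexpect{x\sim P_{\hat{x}}}{x_i}=\hat{x}_i$ we have $c^\top\hat{x}=\mexpect{x\sim P_{\hat{x}}}{c^\top x}$; since $\mathrm{supp}(P_{\hat{x}})$ is a finite subset of $S_1$ and a mean is at least the minimum over the (finite) support, there is $x^0\in\mathrm{supp}(P_{\hat{x}})\subseteq S_1$ with $c^\top x^0\le c^\top\hat{x}$. Hence every value attained on $S_2$ dominates $\min_{x\in S_1}c^\top x$, so $\min_{x\in S_1}c^\top x\le\inf_{\hat{x}\in S_2}c^\top\hat{x}$; the reverse inequality is immediate from $S_1=S_2\cap\{0,1\}^d\subseteq S_2$. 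The same domination shows $S_2\ne\emptyset\Rightarrow S_1\ne\emptyset$, so if $S_1=\emptyset$ then $S_2=\emptyset$ and both $\argmin$ sets are empty, making the identity trivial; otherwise $S_1$ is a nonempty finite set, $v^\star:=\min_{x\in S_1}c^\top x$ is attained, and it equals $\inf_{\hat{x}\in S_2}c^\top\hat{x}$, which is therefore attained as well (at any minimizer of~\eqref{eq:firstproblem}, which lies in $S_2$).

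Finally I would assemble the set identity. If $x^\star\in\argmin_{x\in S_1}c^\top x$, then $x^\star\in S_1\subseteq S_2$ with $c^\top x^\star=v^\star=\min_{\hat{x}\in S_2}c^\top\hat{x}$, so $x^\star$ minimizes~\eqref{eq:secondproblem}, and since $x^\star\in\{0,1\}^d$ it belongs to the right-hand side. Conversely, if $\hat{x}^\star$ belongs to the right-hand side, then $\hat{x}^\star\in S_2\cap\{0,1\}^d=S_1$ with $c^\top\hat{x}^\star=v^\star$, hence $\hat{x}^\star\in\argmin_{x\in S_1}c^\top x$. The two inclusions give the stated equality.

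I expect the only genuinely subtle point to be the support computation in the first step: one must notice that coordinates with $\hat{x}_i\in\{0,1\}$ collapse the corresponding Bernoulli factor, so $\mathrm{supp}(P_{\hat{x}})$ can be a proper subset of $\{0,1\}^d$. This is precisely why $S_2$ may contain points other than box-vertices (and need not be all of $[0,1]^d$), and it is the containment $\mathrm{supp}(P_{\hat{x}})\subseteq S_1$, rather than any convexity, that the averaging argument of the second step exploits. Everything else is routine manipulation of infima and of the linearity of $c^\top(\cdot)$.
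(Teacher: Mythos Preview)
Your proof is correct. The paper itself does not supply a proof of this theorem: it merely states the result as a summary of Theorems~1 and~2 of~\cite{geng2025differentiable} and immediately moves on, so there is no ``paper's own proof'' to compare against here.

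That said, your argument is clean and complete. The three ingredients you isolate---(i) the characterization $S_2\cap\{0,1\}^d=S_1$ via the degenerate-support observation, (ii) the equality of optimal values via the averaging inequality $c^\top\hat{x}=\mexpect{x\sim P_{\hat{x}}}{c^\top x}\ge\min_{x\in\mathrm{supp}(P_{\hat{x}})}c^\top x$ combined with $\mathrm{supp}(P_{\hat{x}})\subseteq S_1$, and (iii) the routine inclusion-chasing---are exactly the right decomposition, and you correctly handle the infeasible case $S_1=\emptyset\Leftrightarrow S_2=\emptyset$. Your closing remark about the support being a possibly proper product subset of $\{0,1\}^d$ (when some $\hat{x}_i\in\{0,1\}$) is also the right diagnosis of why $S_2$ need not be convex, which the paper notes in passing when motivating the penalty reformulation.
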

This suggests that we can solve the problem~\ref{eq:firstproblem} by solving the problem~\ref{eq:secondproblem}.

Note that the searching set of the problem~\ref{eq:secondproblem} is not necessarily convex nor connected, so the paper~\cite{geng2025differentiable} suggests the reformulation from constrained optimization (as in~\ref{eq:secondproblem}) into an unconstrained one: 
\begin{align}
\label{eq:thirdproblem}
    \min_{\hat{x} \in [0,1]^d} c^\top \hat{x} + \mu \sum_{i=1}^m \mexpect{x \sim P_{\hat{x}}}{\max\oo{\left\{a_i^\top x-b_i, 0\right\}}},
\end{align}
and have the theorem 3, which is incorrect. However, the incorrectness of this theorem arises from that the problem~\ref{eq:secondproblem} can be infeasible, and can be considered as a mere technical pathology.\footnote{A detailed explanation on how the theorem 3 of~\cite{geng2025differentiable} is incorrect and how the proof in the paper~\cite{geng2025differentiable} goes wrong is in the appendix~\ref{appendix:theorem3isnotcorrect}, and can likely be attributed to some mathematical typos.}

We can easily correct the theorem as well as adding more detail, making it becomes the following theorem:
\begin{theorem}
    [Corrected Theorem 3 of~\cite{geng2025differentiable}]
    \label{theorem:correct3}
    If there exists some $x \in \{0,1\}^d$ such that $Ax \preccurlyeq b$, then, for any $\mu > \frac{2\sqrt{d} \norm{c}_2}{\min\oo{
            \{1\} \cup 
            \oo{
                \bigcup_{i=1}^m\left\{a_i^\top x - b: x \in \{0,1\}^d\right\}
                \cap (0, \infty)
            }
        }}$, we have that
    \begin{align*}
        \argmin_{\hat{x} \in [0,1]^d} c^\top \hat{x} + \mu \sum_{i=1}^m \mexpect{x \sim P_{\hat{x}}}{\max\oo{\left\{a_i^\top x-b_i, 0\right\}}}
        =
        \argmin_{\hat{x} \in [0,1]^d: \sum_{i=1}^m\mexpect{x \sim P_{\hat{x}}}{\max\oo{\left\{a_i^\top x-b_i, 0\right\}}} = 0} c^\top \hat{x}.
    \end{align*}
\end{theorem}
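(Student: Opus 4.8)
The plan is to exploit the identity $\mexpect{x \sim P_{\hat{x}}}{x} = \hat{x}$, which rewrites the penalized objective as a single expectation of a ``hard'' penalized integer objective,
\begin{align*}
 f_\mu(\hat{x}) &:= c^\top\hat{x} + \mu\sum_{i=1}^m\mexpect{x \sim P_{\hat{x}}}{\max\{a_i^\top x - b_i, 0\}} = \mexpect{x \sim P_{\hat{x}}}{h_\mu(x)}, \\
 h_\mu(x) &:= c^\top x + \mu\sum_{i=1}^m\max\{a_i^\top x - b_i, 0\},
\end{align*}
by linearity of expectation. Fix an optimal solution $x^\star$ of~\eqref{eq:firstproblem} (it exists because $\{0,1\}^d$ is finite and, by hypothesis, the feasible set is nonempty) and put $v^\star := c^\top x^\star$. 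The first step is to show that on $\{0,1\}^d$ one has $h_\mu \ge v^\star$, with strict inequality exactly at the infeasible integer points: if $Ax \preccurlyeq b$ then $h_\mu(x) = c^\top x \ge v^\star$ by optimality of $x^\star$; if $x$ is infeasible then some $a_i^\top x - b_i$ is strictly positive, hence lies in the finite set whose minimum (capped at $1$) is the denominator $\delta$ appearing in the hypothesis on $\mu$, so $\sum_i\max\{a_i^\top x - b_i, 0\} \ge \delta$ and therefore $h_\mu(x) \ge c^\top x + \mu\delta \ge -\sqrt{d}\,\norm{c}_2 + \mu\delta$, using $\abs{c^\top x} \le \norm{c}_1 \le \sqrt{d}\,\norm{c}_2$ for $x \in \{0,1\}^d$; since also $v^\star \le \sqrt{d}\,\norm{c}_2$ by the same bound, the hypothesis $\mu > 2\sqrt{d}\,\norm{c}_2/\delta$ gives $h_\mu(x) > v^\star$.

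Taking expectations over $P_{\hat{x}}$ then yields $f_\mu(\hat{x}) \ge v^\star$ for every $\hat{x} \in [0,1]^d$. Next I would identify the constrained optimum. Write $g(\hat{x}) := \sum_{i=1}^m\mexpect{x \sim P_{\hat{x}}}{\max\{a_i^\top x - b_i, 0\}}$ and $S := \{\hat{x} \in [0,1]^d : g(\hat{x}) = 0\}$. Each summand of $g$ is nonnegative, so $g(\hat{x}) = 0$ forces $a_i^\top x \le b_i$ for every $i$ and every $x$ in the support of $P_{\hat{x}}$, i.e.\ every such $x$ is feasible; hence $\hat{x} \in S$ implies $c^\top\hat{x} = \mexpect{x \sim P_{\hat{x}}}{c^\top x} \ge v^\star$, while the deterministic choice $\hat{x} = x^\star$ lies in $S$ with value $v^\star$. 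Thus $\min_{\hat{x} \in S}c^\top\hat{x} = v^\star$, and since $f_\mu(x^\star) = v^\star$ as well, $\min_{\hat{x} \in [0,1]^d}f_\mu = v^\star$.

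It remains to match the two argmin sets. For ``$\supseteq$'': if $\hat{x} \in S$ with $c^\top\hat{x} = v^\star$ then $f_\mu(\hat{x}) = c^\top\hat{x} = v^\star$, so $\hat{x}$ minimizes $f_\mu$. For ``$\subseteq$'': if $f_\mu(\hat{x}) = v^\star$, i.e.\ $\mexpect{x \sim P_{\hat{x}}}{h_\mu(x)} = v^\star$, then since $h_\mu \ge v^\star$ pointwise on $\{0,1\}^d$ the nonnegative random variable $h_\mu(x) - v^\star$ has mean zero and hence vanishes on the support of $P_{\hat{x}}$; but $h_\mu > v^\star$ strictly at every infeasible integer point, so the support consists only of feasible points, giving $g(\hat{x}) = 0$ (so $\hat{x} \in S$), and on the support $c^\top x = h_\mu(x) = v^\star$, whence $c^\top\hat{x} = \mexpect{x \sim P_{\hat{x}}}{c^\top x} = v^\star$; thus $\hat{x} \in \argmin_{\hat{x} \in S}c^\top\hat{x}$. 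Combining the two inclusions proves the claimed equality.

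The only place the quantitative threshold is genuinely used --- and the step I expect to need the most care --- is the strict separation $h_\mu(x) > v^\star$ for infeasible integer $x$: one must verify that $\delta$ really is a lower bound on the total constraint violation of an arbitrary infeasible integer point, and combine this with the crude but dimension-correct estimate $\abs{c^\top x} \le \sqrt{d}\,\norm{c}_2$. The strictness of the inequality on $\mu$ is exactly what excludes infeasible points from the support in the ``$\subseteq$'' direction and therefore cannot be relaxed to ``$\ge$'' by this argument; everything else is linearity of expectation together with the elementary fact that a nonnegative random variable with zero mean vanishes almost surely. This is, in effect, the standard exact-penalty argument, with the reparameterization handled by pushing it through the expectation.
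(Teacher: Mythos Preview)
Your argument is correct. The key device --- rewriting $f_\mu(\hat{x})$ as $\mexpect{x\sim P_{\hat{x}}}{h_\mu(x)}$ via $\mexpect{x\sim P_{\hat{x}}}{x}=\hat{x}$, then running a standard exact-penalty argument on the finite set $\{0,1\}^d$ --- goes through as you describe, and the quantitative step (bounding $|c^\top x|\le\|c\|_1\le\sqrt{d}\,\|c\|_2$ on the hypercube, and lower-bounding any positive constraint violation by $\delta$) is exactly what the threshold on $\mu$ is calibrated to.

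The paper itself does not give a self-contained proof: it defers to the appendix proof of Theorem~3 in~\cite{geng2025differentiable} together with the correction in Appendix~\ref{subsection:incorrectproof3}, whose only substantive content is that one must start from a point feasible for~\eqref{eq:secondproblem} rather than merely for~\eqref{eq:thirdproblem}. Your proof supplies that missing anchor explicitly by fixing an integer optimum $x^\star$ of~\eqref{eq:firstproblem}, and is otherwise the same exact-penalty mechanism the referenced proof presumably uses. In that sense your write-up is more complete than what appears in the paper, but not a genuinely different route.
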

\begin{proof}
    The proof follows almost from the proof of the theorem 3 of~\cite{geng2025differentiable} in the appendix of~\cite{geng2025differentiable} but with some correction as shown in our appendix~\ref{subsection:incorrectproof3}.
\end{proof}

For a fixed $\mu$, we have that the objective function $c^\top \hat{x} + \mu \sum_{i=1}^m \mexpect{x \sim P_{\hat{x}}}{\max\oo{\left\{a_i^\top x-b_i, 0\right\}}}$ is infinitely differentiable with respect to $\hat{x}$ (in multidimensional sense). However, we can see that the dependence on $\hat{x}$ of the second term goes in as a probability weight over $\{0,1\}^d$. This makes it computational hard to calculate the derivative with respect to $\hat{x}$ if there is no further assumption\footnote{For example, the paper~\cite{karalias2020erdos} considers problem such as max clique and graph-partitioning, allowing the combinatorial cost to only require local structures of the graph solution.} on the sparsity of the matrix $A$.

The paper then uses the reparameterization so that, for each $i \in \{1,2,\dots, m\}$,
\begin{align*}
    \mexpect{x \sim P_{\hat{x}}}{\max\oo{\left\{a_i^\top x - b_i, 0\right\}}}
    &=
    \mexpect{\epsilon \sim \text{Unif}\oo{(0,1)^d}}{\max\oo{\left\{a_i^\top \cc{\mathbf{1}_{\sigma^{-1}\oo{\hat{x}_j} + \sigma^{-1}(\epsilon_j) \ge 0}}_{j=1}^d - b_i, 0\right\}}}\\
    &=
    \mexpect{\epsilon \sim \text{Unif}\oo{(0,1)^d}}{\mathbf{1}_{a_i^\top \cc{\mathbf{1}_{\sigma^{-1}\oo{\hat{x}_j} + \sigma^{-1}(\epsilon_j) \ge 0}}_{j=1}^d - b_i > 0}\oo{a_i^\top \cc{\mathbf{1}_{\sigma^{-1}\oo{\hat{x}_j} + \sigma^{-1}(\epsilon_j) \ge 0}}_{j=1}^d - b_i}}
    ,
\end{align*}
where $\sigma$ is a cumulative distribution function (CDF) of a standard logistic distribution, making $\sigma^{-1}(p) = \log \frac{p}{1-p}$ for all $p \in (0,1)$.
Note that this is correct since the random variable
\begin{align*}
    \cc{\mathbf{1}_{\sigma^{-1}\oo{\hat{x}_j} + \sigma^{-1}(\epsilon_j) \ge 0}}_{j=1}^d \sim P_{\hat{x}}
\end{align*}
when $\epsilon_j \overset{\text{iid}}{\sim} \text{Unif}((0,1))$, which is equivalent to lemma 1 and theorem 4 of the original paper~\cite{geng2025differentiable}. This changes make the probability measure independent from $\hat{x}$ and shifts the dependence on $\hat{x}$ to be only through $\sigma^{-1}$ and $\mathbf{1}_{\cdot \ge 0}$ functions.

Note that, for each realization $\epsilon \in (0,1)^d$, we have that the function\footnote{Note that our $\tilde{\phi}_i\oo{\hat{x}, \epsilon}$ is the same as $\phi_i\oo{\psi\oo{\hat{x}, \epsilon}}$ in the original paper~\cite{geng2025differentiable}.}
\begin{align*}
    \tilde{\phi}_i\oo{\hat{x}, \epsilon} :=
    \mathbf{1}_{a_i^\top \cc{\mathbf{1}_{\sigma^{-1}\oo{\hat{x}_j} + \sigma^{-1}(\epsilon_j) \ge 0}}_{j=1}^d - b_i > 0}\oo{a_i^\top \cc{\mathbf{1}_{\sigma^{-1}\oo{\hat{x}_j} + \sigma^{-1}(\epsilon_j) \ge 0}}_{j=1}^d - b_i}
\end{align*}
is differentiable almost everywhere $\hat{x} \in \text{int}\oo{[0,1]^d} = (0,1)^d$. However, such derivative, if exists, will be $0$, since the step function $\mathbf{1}_{\cdot \ge 0}$ has zero-derivative almost everywhere. Since the function is not a constant function and is, in fact, differentiable, we then have that
\begin{align*}
    \nabla_{\hat{x}} \mexpect{\epsilon \sim \text{Unif}\oo{(0,1)^d}}{\tilde{\phi}_i\oo{\hat{x}, \epsilon}} \ne
    \mexpect{\epsilon \sim \text{Unif}\oo{(0,1)^d}}{\nabla_{\hat{x}} \tilde{\phi}_i\oo{\hat{x}, \epsilon}}.
\end{align*}

The paper~\cite{geng2025differentiable} aims to find a near-optimal and feasible solution to
the integer linear program~\ref{eq:firstproblem} by using a graph neural network (GNN) method for finding a near-optimal solution to the unconstrained problem~\ref{eq:thirdproblem} with some appropriate value of $\mu$. The paper plans to train the GNN using a standard machine learning technique requiring a gradient information (such as stochastic gradient) and backpropagation to update the neural weight parameters in GNN. 

Thus, if we use the stochastic gradient $\nabla_{\hat{x}} \phi_i\oo{\hat{x}, \epsilon}$ in the machine learning training system, then it will be $0$ most of the time making the learning impossible. The paper~\cite{geng2025differentiable} then seeks to find a surrogate loss 
\begin{align*}
    \varphi_i\oo{\hat{x}, \epsilon} \approx \tilde{\phi}_i\oo{\hat{x}, \epsilon}
\end{align*}
such that
\begin{align*}
    \nabla_{\hat{x}} \mexpect{\epsilon \sim \text{Unif}\oo{(0,1)^d}}{\varphi_i\oo{\hat{x}, \epsilon}} =
    \mexpect{\epsilon \sim \text{Unif}\oo{(0,1)^d}}{\nabla_{\hat{x}} \varphi_i\oo{\hat{x}, \epsilon}}.
\end{align*}

\section{Incorrect Attempt in~\cite{geng2025differentiable}}

The paper suggests the use of
\begin{align*}
    \varphi_i\oo{\hat{x}, \epsilon} :=
    \mathbf{1}_{a_i^\top \cc{\mathbf{1}_{\sigma^{-1}\oo{\hat{x}_j} + \sigma^{-1}(\epsilon_j) \ge 0}}_{j=1}^d - b_i > 0}\oo{a_i^\top \cc{\sigma\oo{\sigma^{-1}\oo{\hat{x}_j} + \sigma^{-1}(\epsilon_j)}}_{j=1}^d - b_i}
\end{align*}
in place of
\begin{align*}
    \tilde{\phi}_i\oo{\hat{x}, \epsilon} :=
    \mathbf{1}_{a_i^\top \cc{\mathbf{1}_{\sigma^{-1}\oo{\hat{x}_j} + \sigma^{-1}(\epsilon_j) \ge 0}}_{j=1}^d - b_i > 0}\oo{a_i^\top \cc{\mathbf{1}_{\sigma^{-1}\oo{\hat{x}_j} + \sigma^{-1}(\epsilon_j) \ge 0}}_{j=1}^d - b_i}.
\end{align*}
Recall that $\sigma$ is the CDF of standard logistic distribution, so $\sigma(x) = \frac{1}{1+e^{-x}}$ for all $x \in \mathbb{R}$. It is also easy to see that, similar to $\tilde{\phi}_i$, the function $\varphi_i$ is differentiable for all $\epsilon \in (0,1)^d$ for almost every $\hat{x} \in (0,1)$. The choice of using the CDF $\sigma(\cdot)$ in place of the step function $\mathbf{1}_{\cdot \ge 0}$ is common differentiability technique that one can see in classification literature. This often involves convolution either analytically or empirically~\cite{stewart2023differentiable}\footnote{If it is used in empirical sense, then the gradient has to be done in a monte-carlo manner and ain a finite-difference manner~\cite{bengio2013estimating,stewart2023differentiable}, meaning that we cannot use expected gradient approximates gradient of expectation, since the former is almost surely $0$.} in order for the global structure of the piece-wise function to be passed into a gradient quantity. However, that normally happens without an additional indicator function.\footnote{Which is a feature of a straight-through estimator.}

The paper claims that differentiability almost everywhere is a crucial property allowing   
\begin{align*}
    \nabla_{\hat{x}} \mexpect{\epsilon \sim \text{Unif}\oo{(0,1)^d}}{\varphi_i\oo{\hat{x}, \epsilon}} =
    \mexpect{\epsilon \sim \text{Unif}\oo{(0,1)^d}}{\nabla_{\hat{x}} \varphi_i\oo{\hat{x}, \epsilon}}.
\end{align*}
However, as we have seen earlier, the function $\tilde{\phi}_i$ itself is also differentiable almost everywhere, but does not satisfy the interchangeability between expectation and differentiation. 

The paper provides the following incorrect theorem:
\begin{incorrect_theorem}[Restated Theorem 5 of~\cite{geng2025differentiable}]
    For almost every $\hat{x} \in (0,1)^d$,
    \begin{align*}
        \nabla_{\hat{x}} \mexpect{\epsilon \sim \text{Unif}\oo{(0,1)^d}}{\varphi_i\oo{\hat{x}, \epsilon}}
        =
        \mexpect{\epsilon \sim \text{Unif}\oo{(0,1)^d}}{
        \mathbf{1}_{a_i^\top \cc{\mathbf{1}_{\sigma^{-1}\oo{\hat{x}_j} + \sigma^{-1}(\epsilon_j) \ge 0}}_{j=1}^d - b_i > 0}
        \oo{a \odot \cc{\frac{\partial}{\partial x_j}\sigma\oo{\sigma^{-1}\oo{\hat{x}_j} + \sigma^{-1}(\epsilon_j)}}_{j=1}^d}
        },
    \end{align*}
    where $\odot$ denotes elementwise multiplication.
\end{incorrect_theorem}

This theorem primarily suggests that we can interchange the expectation and differentiation. We can see that, if we can do so, then, for each realization of $\epsilon$, we can compute the differentiation term of each dimension in a parallel manner before multiply it by the indicator $\mathbf{1}_{a_i^\top \cc{\mathbf{1}_{\sigma^{-1}\oo{\hat{x}_j} + \sigma^{-1}(\epsilon_j) \ge 0}}_{j=1}^d - b_i > 0}$ to see whether the $i^{\text{th}}$ constraint is violated. 

Unlike the incorrectness in the theorem 3 of~\cite{geng2025differentiable}, the incorrectness of the theorem 5 of~\cite{geng2025differentiable} is not merely a technical problem. This happens primarily because the function $\varphi$ itself is not continuous.\footnote{Although continuity and differentiability almost everywhere may not be a sufficient condition, other additional conditions can be thought of as a technical conditions (such as absolute continuity and difference quotient domination).}

\subsection{Why Continuity Matters: a Minimal Example}

    As a simple example, we can consider a 1-dimensional case when the function of interest is
    \begin{align*}
        f\oo{\hat{x}, \epsilon} := \mathbf{1}_{\epsilon \le \hat{x}} \hat{x},
    \end{align*}
    making $\frac{\partial}{\partial \hat{x}} f\oo{\hat{x}, \epsilon} = \mathbf{1}_{\epsilon \le \hat{x}}$ for any $\hat{x} \in (0,1) - \{\epsilon\}$.
    
    This makes
    \begin{align*}
        \mexpect{\epsilon \sim \text{Unif}((0,1))}{\frac{\partial}{\partial \hat{x}} f\oo{\hat{x}, \epsilon}} = 
        \mexpect{\epsilon \sim \text{Unif}((0,1))}{\mathbf{1}_{\epsilon \le \hat{x}}}
        =
        \int_{\epsilon=0}^{\hat{x}} 1 d\epsilon = \hat{x},
    \end{align*}
    while
    \begin{align*}
        \frac{d}{d\hat{x}}\mexpect{\epsilon \sim \text{Unif}((0,1))}{f\oo{\hat{x}, \epsilon}}
        =
        \frac{d}{d\hat{x}}\mexpect{\epsilon \sim \text{Unif}((0,1))}{\mathbf{1}_{\epsilon \le \hat{x}} \hat{x}}
        =
        \frac{d}{d\hat{x}}
        \cc{
        \int_{\epsilon  =0 }^{\hat{x}}
        \hat{x} d\epsilon
        }
        =
        2\hat{x}.
    \end{align*}
    
    It can be seen that the discrepancy in this 1-dimensional comes from the change of the integral boundary, but this can also be thought of as a result of discontinuity of the original function, which has led us to use integration with boundary dependent on $\hat{x}$ in the first place.
    
    In this simple example, we can see that the derivative can be fixed by adding a boundary term so that
    \begin{align*}
        \frac{d}{d\hat{x}}\mexpect{\epsilon \sim \text{Unif}((0,1))}{f\oo{\hat{x}, \epsilon}}
        =
        \mexpect{\epsilon \sim \text{Unif}((0,1))}{\frac{\partial}{\partial \hat{x}} f\oo{\hat{x}, \epsilon}}
        + \oo{\frac{d}{d\hat{x}}\hat{x}} \left[\epsilon\middle\vert_\epsilon=\hat{x}\right].
    \end{align*}
    
    However, this relies on the fact that the boundary term is easy to find in this case. For the function $\varphi_i$ used in~\cite{geng2025differentiable}, there is no simple correction without further knowledge on the structure of $A$ and $b$. Thus, we cannot fix it in a dimension scalable manner.

\section{1-Dimensional Example}

We consider a concrete example with 1-dimension, so we drop the subscript. We consider the case when $a>b>0$, so the only feasible solution to the combinatorial problem~\ref{eq:firstproblem} is $x = 0$. Similarly, we have that the only feasible solution to the problem~\ref{eq:secondproblem} is $\hat{x} = 0$.

For simplicity, we consider the case where $c = 0$, so, by theorem~\ref{theorem:correct3}, we can choose $\mu$ to be $1$\footnote{However, the behavior will be similar to the case when $c$ is non-zero but $\mu$ is sufficiently large, since only the ratio  between $c$ and $\mu$ matters.} 

We have that, for any $\hat{x} \in (0,1)$,
\begin{align*}
    \hat{\varphi}\oo{\hat{x}}
    :&=
    \mexpect{\epsilon \sim \text{Unif}((0,1))}{\varphi\oo{\hat{x}, \epsilon}}\\
    &=
    \int_{\epsilon =0}^1 \cc{
        a\sigma\oo{\sigma^{-1}\oo{\hat{x}} + \sigma^{-1}\oo{\epsilon}}-b
    }
    \mathbf{1}_{a\oo{
    \mathbf{1}_{\sigma^{-1}\oo{\hat{x}} + \sigma^{-1}\oo{\epsilon} \ge 0}
    }-b > 0}
    d\epsilon\\
    &=
    \int_{\epsilon =0}^1 \cc{
        a\sigma\oo{\sigma^{-1}\oo{\hat{x}} + \sigma^{-1}\oo{\epsilon}}-b
    }
    \mathbf{1}_{\epsilon > 1-\hat{x}}
    d\epsilon
    \\
    &=
    \int_{\epsilon =0}^1 \cc{
        -a\frac{1}{1+e^{\sigma^{-1}\oo{\hat{x}} + \sigma^{-1}\oo{\epsilon}}}+ (a-b)
    }
    \mathbf{1}_{\epsilon > 1-\hat{x}}
    d\epsilon
    \\
    &=
    \int_{\epsilon =0}^1 \cc{
        -a\frac{1}{1+\frac{\hat{x}}{1-\hat{x}} \frac{\epsilon}{1-\epsilon}}+ (a-b)
    }
    \mathbf{1}_{\epsilon > 1-\hat{x}}
    d\epsilon
    \\
    &=
    a\oo{-\int_{\epsilon =1-\hat{x}}^1
        \frac{1}{1+\frac{\hat{x}}{1-\hat{x}} \frac{\epsilon}{1-\epsilon}} d\epsilon}
    + (a-b)\hat{x}\\
    &=
    a f_1\oo{\hat{x}}
    + (a-b)\hat{x},
\end{align*}
where
\begin{align*}
    f_1\oo{\hat{x}}
    :=
    -\int_{\epsilon =1-\hat{x}}^1
        \frac{1}{1+\frac{\hat{x}}{1-\hat{x}} \frac{\epsilon}{1-\epsilon}} d\epsilon
    =
    \begin{cases}
        \frac{\hat{x} \oo{1-\hat{x}}}{\oo{2\hat{x}-1}^2}\cc{\ln \oo{2-2\hat{x}} + 2 \hat{x}-1} &\text{ if }\hat{x} \ne \frac{1}{2}\\
        -\frac{1}{8}  &\text{ if }\hat{x} = \frac{1}{2}
    \end{cases}.
\end{align*}
Although $f_1$ is by definition has a domain being $(0,1)$, we can see that we can naturally extend it to $[0,1]$ from continuity, and have that $f_1(0)=f_1(1)=0$. Furthermore, $f_1$ is bounded, non-positive, and strictly convex in $\hat{x} \in [0,1]$. Thus, the function $\hat{\varphi}$ is also bounded and strictly convex, thereby being closed and proper as well as having a unique minimizer~\cite{ryu2022large}. However, since the unique minimizer of $f_1$ is greater than $0.5$, with sufficiently small $\frac{b}{a-b}$, we will have that the unique minimizer of $\hat{\varphi}$ will also be greater than $0.5$.\footnote{See figure~\ref{fig:potential}.} Thus, the unique minimizer $\hat{x}^*$ of $\hat{\varphi}$ will more than half of the time yields an invalid $x$ from the distribution $P_{\hat{x}^*}$. Note that, whenever $a>b$, the value of $\hat{\varphi}$ will exceed $0$ when $\hat{x}$ is sufficiently big, but it is possible for $\hat{\varphi}$ to have a negative value in the interior.

\begin{figure}
    \centering
    \includegraphics[width=0.7\textwidth]{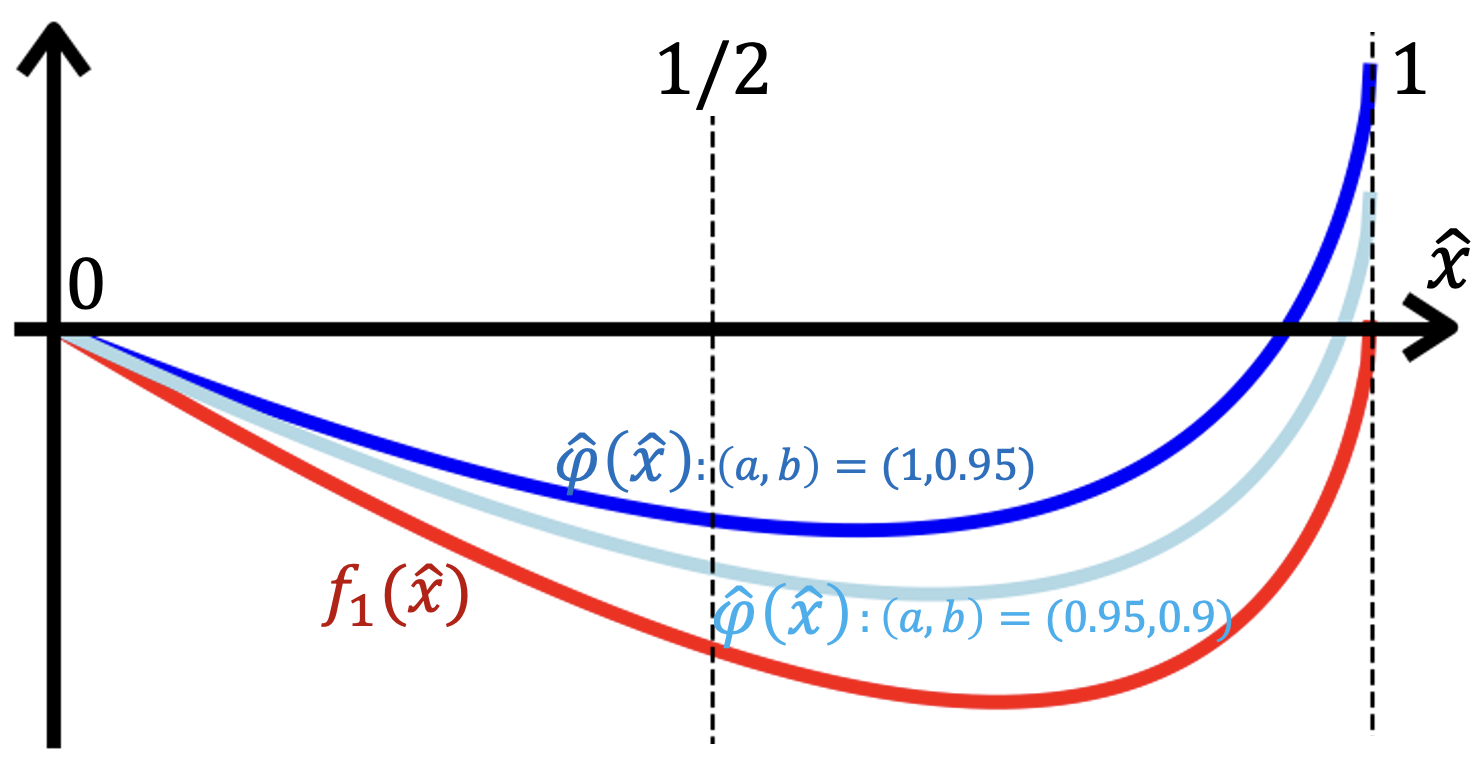}
    \caption{In 1-dimensional case where $a>b>0$, we will have that the loss proposed in~\cite{geng2025differentiable} is $\hat{\varphi}$, which is a transformation of the function $f_1$, which is independent from $a$ and $b$. Both functions are bounded and convex, so the minimizer is unique. Strict convexity suggests that the unique minimizer of $\hat{\varphi}$ will be close to that of $f_1$ when $\frac{b}{a-b}$ is close to $0$, so such minimizer will still be greater than $0.5$.}
    \label{fig:potential}
\end{figure}

\subsection{How the Algorithm in~\cite{geng2025differentiable} Works}

    Recall that
    \begin{align}
    \label{eq:4}
        \hat{\varphi}\oo{\hat{x}} =
        a\oo{-\int_{\epsilon =1-\hat{x}}^1
            \frac{1}{1+\frac{\hat{x}}{1-\hat{x}} \frac{\epsilon}{1-\epsilon}} d\epsilon}
        + (a-b)\hat{x},
    \end{align}
    while the expected stochastic gradient term is
    \begin{align*}
        \hat{g}\oo{\hat{x}}
        :&=
        \mexpect{\epsilon \sim \text{Unif}\oo{(0,1)^d}}{
        \mathbf{1}_{a_i^\top \cc{\mathbf{1}_{\sigma^{-1}\oo{\hat{x}_j} + \sigma^{-1}(\epsilon_j) \ge 0}}_{j=1}^d - b_i > 0}
        \oo{a \odot \cc{\frac{\partial}{\partial x_j}\sigma\oo{\sigma^{-1}\oo{\hat{x}_j} + \sigma^{-1}(\epsilon_j)}}_{j=1}^d}
        }\\
        &=
        a\oo{-\int_{\epsilon =1-\hat{x}}^1 \frac{\partial}{\partial\hat{x}}
            \frac{1}{1+\frac{\hat{x}}{1-\hat{x}} \frac{\epsilon}{1-\epsilon}} d\epsilon}
        + (a-b),
    \end{align*}
    so we can see that the difference between $\hat{g}\oo{\hat{x}}$ and $\frac{d}{d\hat{x}}\hat{\varphi}\oo{\hat{x}}$ is from the boundary term. Formally, we note that
    \begin{align*}
        \frac{d}{d\hat{x}}\hat{\varphi}\oo{\hat{x}}
        &=
        a\oo{-\int_{\epsilon =1-\hat{x}}^1 \frac{\partial}{\partial\hat{x}}
            \frac{1}{1+\frac{\hat{x}}{1-\hat{x}} \frac{\epsilon}{1-\epsilon}} d\epsilon}
        +a \frac{d\oo{1-\hat{x}}}{d\hat{x}} \frac{1}{1+\frac{\hat{x}}{1-\hat{x}} \frac{1-\hat{x}}{1-\oo{1-\hat{x}}}}
        + (a-b)\\
        &=
        \hat{g}
        a\oo{-\int_{\epsilon =1-\hat{x}}^1 \frac{\partial}{\partial\hat{x}}
            \frac{1}{1+\frac{\hat{x}}{1-\hat{x}} \frac{\epsilon}{1-\epsilon}} d\epsilon}
        -\frac{a}{2}
        + (a-b),
    \end{align*}
    making
    \begin{align*}
        \hat{g}\oo{\hat{x}} = \frac{a}{2}+\frac{d}{d\hat{x}}\hat{\varphi}\oo{\hat{x}}
        =
        a\oo{f_1'\oo{\hat{x}}+\frac{1}{2}} + (a-b).
    \end{align*}
    We can see that $\min_{\hat{x} \in [0,1]}\oo{f_1'\oo{\hat{x}}+\frac{1}{2}} = f_1'(0)+\frac{1}{2} = \ln 2 > 0$. Therefore, $\hat{g}\oo{\hat{x}}$ is uniformly bounded away from $0$, since, for any value of $\hat{x} \in [0,1]$, it remains $\ge a \ln 2 + a-b>0$. Thus, the algorithm will always have its stochastic gradient descent (in expectation) always move the point $x$ towards $0$.

    Specifically,
    it will be as if we perform a stochastic gradient descent over the function
    \begin{align*}
        \hat{\hat{\varphi}}\oo{\hat{x}} := a \hat{f}\oo{\hat{x}} + (a-b)\oo{\hat{x}},
    \end{align*}
    where
    \begin{align*}
        \hat{f}\oo{\hat{x}} := f\oo{\hat{x}} +\frac{\hat{x}}{2},
    \end{align*}
    because
    \begin{align*}
        \hat{\hat{\varphi}}'\oo{\hat{x}} = \hat{g}\oo{\hat{x}}.
    \end{align*}
    This means that the algorithm in~\cite{geng2025differentiable} actually performs stochastic gradient descent on the new potential $\hat{\hat{\varphi}}\oo{\hat{x}} = \hat{\varphi}\oo{\hat{x}} + \frac{a\hat{x}}{2}$, which is strictly increasing on $[0,1]$, instead of on $\hat{\varphi}_1\oo{\hat{x}}$ as claimed in its theorem 5.\footnote{However, at least in 1 dimensional case, the new potential turns out to be much better than the former one, since $\hat{g}\oo{\hat{x}}$ is uniformly bounded away from $0$, so this may explain why the algorithm in~\cite{geng2025differentiable} can still achieve a good result. However, we can see in the section~\ref{section: 2d} that the uniform bound does not apply to 2 dimensional case.} The difference between the new potential $\hat{\hat{\varphi}}$, the old potential $\hat{\varphi}$, and the actual loss of interest $\hat{\phi}\oo{\hat{x}} := \mexpect{\epsilon \sim \text{Unif}((0,1))}{\phi\oo{\hat{x}, \epsilon}}$ can be seen in the figure~\ref{figure:2}. 
    
    This is a common problem for an optimization system when discontinuity happens either from the nature of the loss function or from underlying decision system~\cite{jalota2024simple, sornwanee20251}, making the local direction possibly lead to worse outcome. In this case, the discontinuity comes from the multiplier $\mathbf{1}_{z \ge 0}$. It is then natural that the optimal $z$ is to allow a considerable mass of the yellow distribution to go to the positive region in order to get the negative loss.

    \begin{figure}[p]
        \centering
        \begin{subfigure}{0.7\textwidth}
            \centering
            \includegraphics[width=\linewidth]{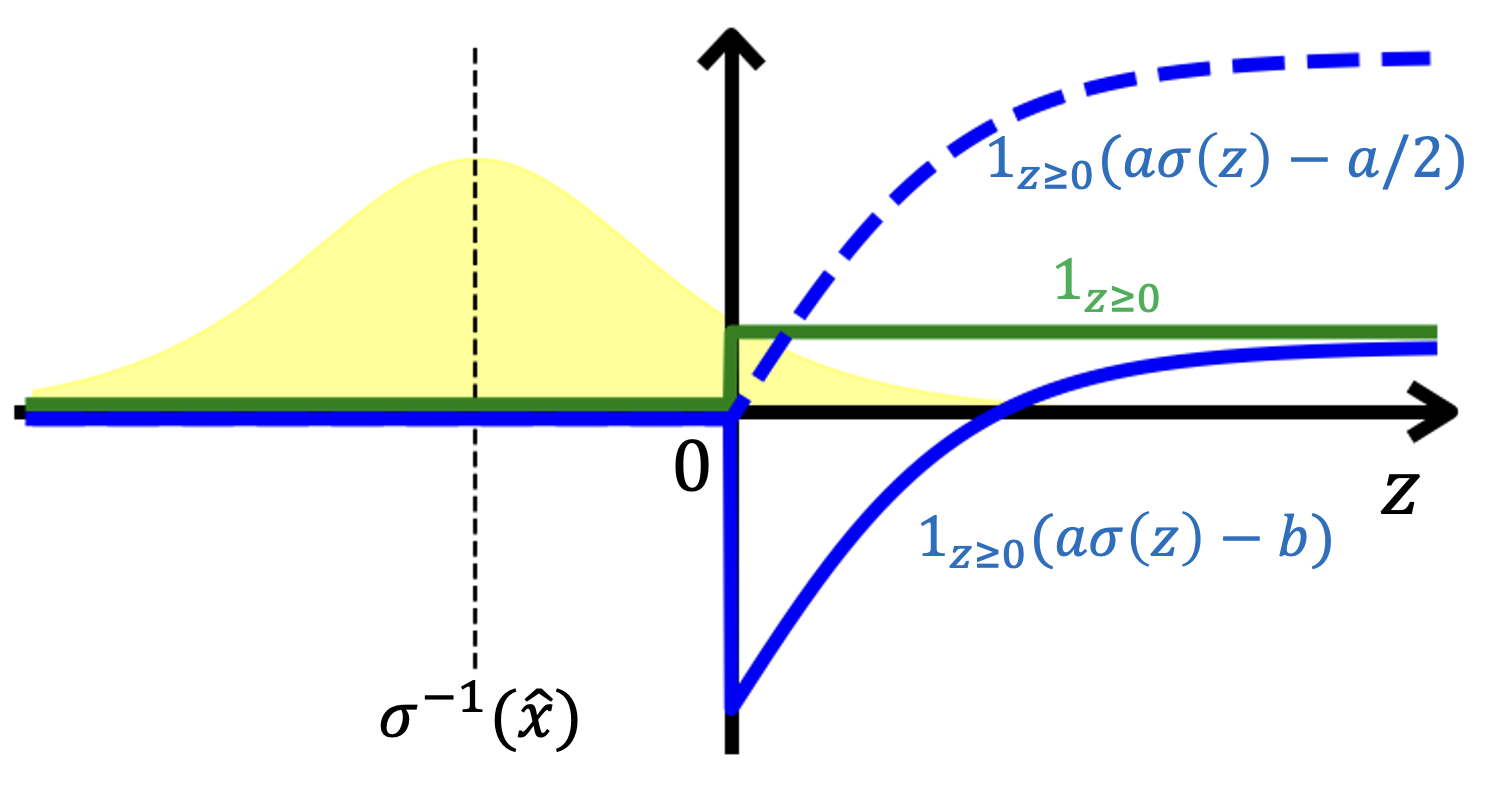}
            \caption{The Convolution Form}
        \end{subfigure}    
        \begin{subfigure}{0.7\textwidth}
            \centering
            \includegraphics[width=\linewidth]{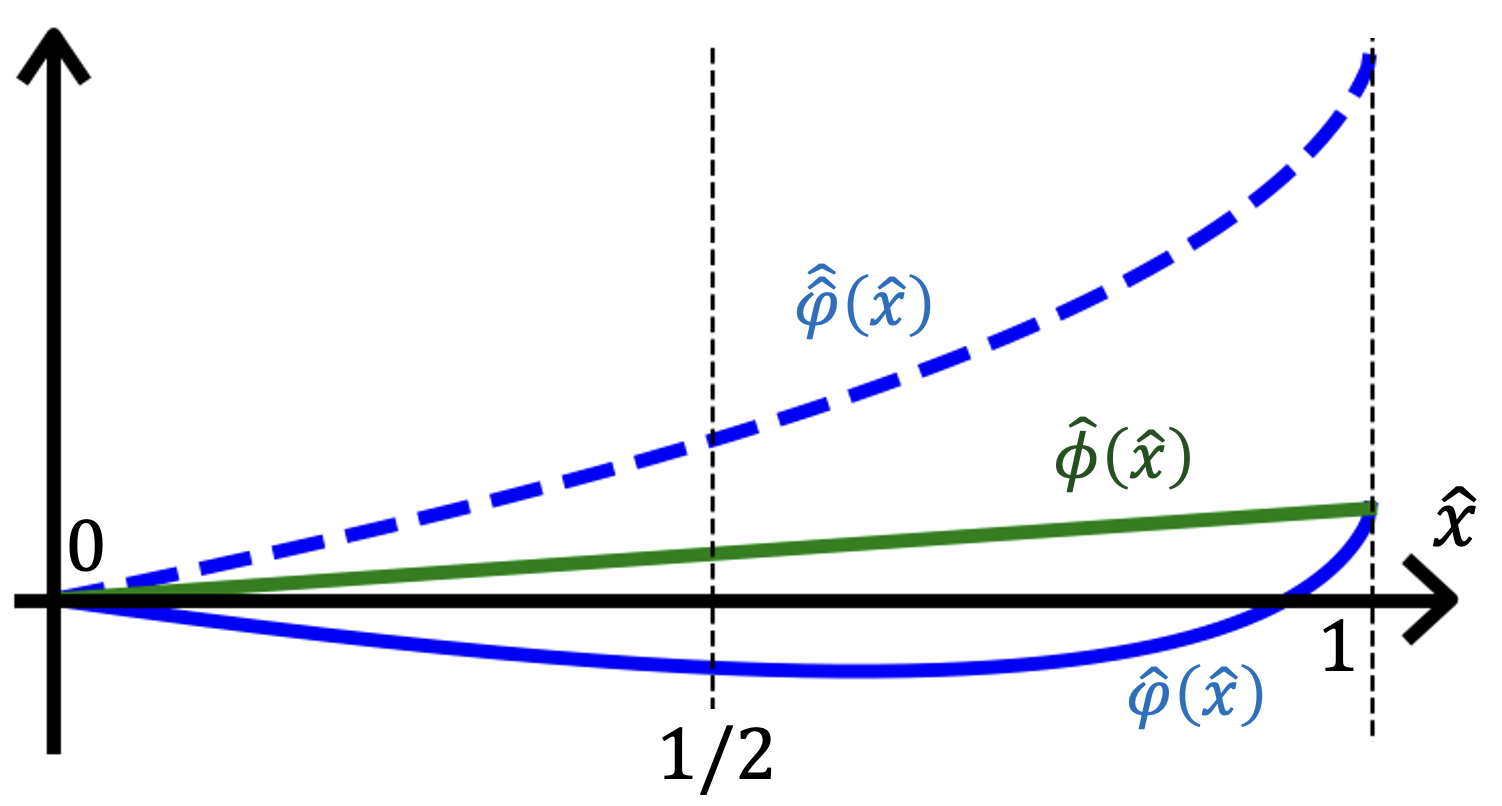}
            \caption{The Potential Form}
        \end{subfigure}
        \caption{In these two figures, we set $(a,b) = (1, 0.95)$. To evaluate the actual loss $\hat{\phi}\oo{\hat{x}}$, we start from encoding $\hat{x}$ to be $z = \sigma^{-1}\oo{\hat{x}}$, and sample a logistic distribution to be added to $\sigma^{-1}\oo{\hat{x}}$ (as shown as the yellow density in the top figure). It is easy to see that $\hat{\phi}\oo{\hat{x}}$ will be the integration of the step function (in green) with respect to the density (in yellow).\\The paper~\cite{geng2025differentiable} suggests a surrogate function (as the bold blue line), which is $0$ when lower than $0$ and converge to $1$ when $z \to \infty$. The integration of such function with respect to the yellow density will yield $\hat{\varphi}\oo{\hat{x}}$.\\When the value of the encoded $z = \sigma^{-1}\oo{\hat{x}}$ shifts, the yellow density will also shift by the same distance. Thus, if a function is continuous together with some regularity conditions, we will have that the derivative (with respect to $z$) of the integration of it with respect to the yellow density will be the same as the integration of its derivative (with respect to $z$) with respect to the yellow density. However, the bold blue function is discontinuous, so such equality does not hold.\\We note that the dotted blue line is the bold blue function with an upward shift for the positive region of $b - \frac{a}{2} > 0$. Now, we have that the dotted blue function is absolutely continuous and has the same derivative as the bold blue function almost everywhere. Thus, we have that the algorithm in~\cite{geng2025differentiable} would not be changed if the bold blue function were to be the dotted one. For the dotted function, we can swap the integration and differentiation, so the algorithm will be as if we perform stochastic gradient on $\hat{\hat{\varphi}}$, which is the integral of the dotted blue line, and is different from the proposed surrogate $\hat{\varphi}$.
        }
        \label{figure:2}
    \end{figure}

    Moreover, we will also have that, the algorithm may not even implement any stochastic gradient descent in a more than one dimensional case, meaning that there does not exists any absolutely continuous\footnote{We restrict our attention to the potential that is absolutely continuous to ensure that gradient descent will reduce the potential. This is commonly assumed and used.} potential $\hat{\hat{\varphi}}$ such that $\hat{g} = \nabla \hat{\hat{\varphi}}$. The example will be shown in the 2 dimensional case in the section~\ref{section: 2d}.

    \subsection{Possible Fix?}

    If we want to commit to the surrogate loss (the old potential) of 
    \begin{align*}
        \hat{\varphi}\oo{\hat{x}} := \mexpect{\epsilon \sim \text{Unif}((0,1))}{\varphi\oo{\hat{x}, \epsilon}}
    \end{align*}
    as a good plug-in approximation of the actual loss of
    \begin{align*}
        \hat{\phi}\oo{\hat{x}} := \mexpect{\epsilon \sim \text{Unif}((0,1))}{\phi\oo{\hat{x}, \epsilon}}.
    \end{align*}
    from that the new potential $\tilde{\hat{\varphi}}_1$ differs from the former potential $\hat{\varphi}_1$ by $\frac{ax}{2}$, it is tempting to think that we can easily fix the stochastic gradient in~\cite{geng2025differentiable} by adding a constant term. However, there are main problems with that. First, even in 1 dimensional case, we requires access to the global structure to determine whether we have to add such constant term of $\frac{a}{2}$. For example, if $b > a$ or $b <0$, then the integral in the equation~\ref{eq:4} has its boundaries being independent from the value of $\hat{x}$. Thus, the constant term, which comes from that $\frac{1}{1+\frac{\hat{x}}{1-\hat{x}}\frac{\epsilon}{1-\epsilon}} = \frac{1}{2}$ at the boundary of $\epsilon = 1-\hat{x}$, will not be applied. Note that this is easy to fix in 1-dimensional case\footnote{However, in 1-dimensional case, it is also easy to solve the integer linear programming directly as a simple function of $a$ and $b$}, but in high dimensional regime, we need to know the behavior of the constraint when we change the assignment. This computation tends to be difficult in a general integer linear programming.

    The second problem is that having the function being a constant at the boundary is easily achieved in 1 dimension but not in higher dimension as can be seen in the example with 2 dimension in the section~\ref{section: 2d}.

\section{2-Dimensional Example}
\label{section: 2d}

    We have seen that, in a 1-Dimensional case, we can still rationalize the algorithm in~\cite{geng2025differentiable} as a stochastic gradient descent on a new potential $\hat{\hat{\varphi}}$ instead of the surrogate loss $\hat{\varphi}$ proposed and claimed in the paper. In a 2 dimensional case, we will see that it is possible that $\hat{g}$ cannot be rationalized as a gradient of any absolutely continuous potential $\hat{\hat{\varphi}}$. We will show this by claiming that the curl
    \begin{align*}
        \vec{\nabla} \times \hat{g} \ne 0.
    \end{align*}

    We will restrict the attention when there is only a single constraint in the integer linear programming~\ref{eq:firstproblem}, so the subscript will be omitted. Thus, the problem will be specified by $a$, $b$, and $c$.
    
    We will overload the notation and denote $a = (a_1, a_2)$, and assume that $c = (0,0)$. Thus, the problem will be specified by $a_1$, $a_2$, and $b$.
    
    Consider the case when $a_1+a_2 > b \ge a_1 \ge a_2 > 0$, so the optimal solutions for the original problem~\ref{eq:firstproblem} are $(0,0)$, $(0,1)$, and $(1,0)$. The set of the optimal solutions to the reformulated problem~\ref{eq:thirdproblem} is the L-shape set
    \begin{align*}
        \oo{[0,1] \times \{0\}} \cup \oo{\{0\} \times [0,1]}.
    \end{align*}

    We, similarly, have that
    \begin{align*}
        \hat{\varphi}\oo{\hat{x}}
        :&=
        \mexpect{\epsilon \sim \text{Unif}\oo{(0,1)^2}}{\varphi\oo{\hat{x}, \epsilon}}\\
        &=
        \int_{\epsilon_1 =0}^1\int_{\epsilon_2 =0}^1 \cc{
            \sum_{i=1}^2a_i\sigma\oo{\sigma^{-1}\oo{\hat{x}_i} + \sigma^{-1}\oo{\epsilon_i}}-b
        }
        \mathbf{1}_{\sum_{i=1}^2a_i\oo{\mathbf{1}_{\sigma\oo{\sigma^{-1}\oo{\hat{x}_i} + \sigma^{-1}\oo{\epsilon_i}} > 0}}-b > 0}
        d\epsilon_2 d\epsilon_1\\
        &=
        \int_{\epsilon_1 =0}^1\int_{\epsilon_2 =0}^1 \cc{
            \sum_{i=1}^2a_i\sigma\oo{\sigma^{-1}\oo{\hat{x}_i} + \sigma^{-1}\oo{\epsilon_i}}-b
        }
        \mathbf{1}_{\epsilon_1 >1 - \hat{x}_1}
        \mathbf{1}_{\epsilon_2 >1 - \hat{x}_2}
        d\epsilon_2 d\epsilon_1\\
        &=
        \int_{\epsilon_1 =1 - \hat{x}_1}^1\int_{\epsilon_2 =1 - \hat{x}_2}^1 \cc{
            \sum_{i=1}^2a_i\sigma\oo{\sigma^{-1}\oo{\hat{x}_i} + \sigma^{-1}\oo{\epsilon_i}}-b
        }
        d\epsilon_2 d\epsilon_1\\
        &=
        \sum_{i=1}^2
        \int_{\epsilon_1 =1 - \hat{x}_1}^1\int_{\epsilon_2 =1 - \hat{x}_2}^1 \cc{
            a_i\sigma\oo{\sigma^{-1}\oo{\hat{x}_i} + \sigma^{-1}\oo{\epsilon_i}}-b
        }
        d\epsilon_2 d\epsilon_1\\
        &=
        -b\hat{x}_1\hat{x}_2+
        \hat{x}_1
        \int_{\epsilon_2 =1 - \hat{x}_2}^1 a_2\sigma\oo{\sigma^{-1}\oo{\hat{x}_2} + \sigma^{-1}\oo{\epsilon_2}} - \frac{b}{2} d\epsilon_2 
        +
        \hat{x}_2
        \int_{\epsilon_1 =1 - \hat{x}_1}^1 a_1\sigma\oo{\sigma^{-1}\oo{\hat{x}_1} + \sigma^{-1}\oo{\epsilon_1}} - \frac{b}{2} d\epsilon_1 \\
        &=
        \hat{x}_2a_1 \cc{f_1\oo{\hat{x}_1} + \hat{x}_1} + 
        \hat{x}_1a_2 \cc{f_1\oo{\hat{x}_2} + \hat{x}_2} - b \hat{x}_1\hat{x}_2\\
        &=
        \hat{x}_2a_1f_1\oo{\hat{x}_1} + 
        \hat{x}_1a_2 f_1\oo{\hat{x}_2} + \oo{a_1+a_2- b} \hat{x}_1\hat{x}_2,
    \end{align*}
    where $f_1$ is as defined in the 1-dimensional example section.

    Similar to the behavior seen in 1-dimensional, we can see that, if $a_1+a_2$ is sufficiently close to $b$, then the minimizer of $\hat{\varphi}\oo{\hat{x}}$ is in the interior of $[0,1]^2$. 
    
    Moreover, the expectation of the supposedly stochastic gradient in~\cite{geng2025differentiable} is
    \begin{align*}
        \hat{g}\oo{\hat{x}}
        :&=
        \mexpect{\epsilon \sim \text{Unif}\oo{(0,1)^2}}{\nabla_{\hat{x}}\varphi\oo{\hat{x}, \epsilon}}\\
        &=
        \int_{\epsilon_1 =0}^1\int_{\epsilon_2 =0}^1 \nabla_{\hat{x}}\cc{
            \sum_{i=1}^2a_i\sigma\oo{\sigma^{-1}\oo{\hat{x}_i} + \sigma^{-1}\oo{\epsilon_i}}-b
        }
        \mathbf{1}_{\sum_{i=1}^2a_i\oo{\mathbf{1}_{\sigma\oo{\sigma^{-1}\oo{\hat{x}_i} + \sigma^{-1}\oo{\epsilon_i}} > 0}}-b > 0}
        d\epsilon_2 d\epsilon_1\\
        &=
        \int_{\epsilon_1 =1-\hat{x}_1}^1\int_{\epsilon_2 =1-\hat{x}_2}^1 \nabla_{\hat{x}}\cc{
            \sum_{i=1}^2a_i\sigma\oo{\sigma^{-1}\oo{\hat{x}_i} + \sigma^{-1}\oo{\epsilon_i}}-b
        }
        d\epsilon_2 d\epsilon_1,
    \end{align*}
    making the first coordinate
    \begin{align*}
        \cc{\hat{g}\oo{\hat{x}}}_1
        =
        \oo{\int_{\epsilon_2 =1-\hat{x}_2}^1 d\epsilon_2}\oo{
        \int_{\epsilon_1 =1-\hat{x}_1}^1 \frac{\partial}{\partial{\hat{x}_1}}\cc{
            a_1\sigma\oo{\sigma^{-1}\oo{\hat{x}_1} + \sigma^{-1}\oo{\epsilon_1}}-b
        }
         d\epsilon_1}
         = \hat{x}_2 a_1 \oo{\frac{1}{2}+f_1'\oo{\hat{x}_1}}.
    \end{align*}

    Similarly, we have that
    \begin{align*}
        g\oo{\hat{x}}
        = 
        \begin{bmatrix}
            \hat{x}_2 a_1 \oo{\frac{1}{2} + f'\oo{\hat{x}_1}}\\
            \hat{x}_1 a_2 \oo{\frac{1}{2} + f'\oo{\hat{x}_2}}
        \end{bmatrix}
    \end{align*}
    Unlike its 1-dimensional counterpart, we do not $g\oo{\hat{x}}$ is uniformly bounded away (in conic sense/element-wise sense) from $(0,0)$. 

    Moreover, the curl
    \begin{align*}
        \int_{\epsilon_2 =1-\hat{x}_2}^1 d\epsilon_2 \times g\oo{\hat{x}}
        :=
        \frac{\partial}{\partial \hat{x}_2} \cc{g\oo{\hat{x}}}_1-\frac{\partial}{\partial \hat{x}_1} \cc{g\oo{\hat{x}}}_2
        =
        \frac{a_2-a_1}{2} + a_2 f_1'\oo{\hat{x}_2} - a_1 f_1'\oo{\hat{x}_1} \ne 0
    \end{align*}
    almost everywhere. Thus, there is no absolutely continuous potential $\hat{\hat{\varphi}}$ such that $\hat{g} = \nabla \hat{\hat{\varphi}}$.

\section{Downstream Work of~\cite{geng2025differentiable}}

    \paragraph{Propagated Error}

        The paper ``DEFT: Differentiable Automatic Test Pattern Generation", Li, et. al., 2025~\cite{li2025deft} cites the paper~\cite{geng2025differentiable} and employs the same method. Thus, it contains the same mathematical error.

    \paragraph{Papers that cite this paper}

        The papers~\cite{do2025hephaestus, zeng2025scalable} claim that this paper uses differentiable surrogate objective and performs gradient descent, which is an incorrect claim. \cite{do2025hephaestus} uses the paper as a benchmark, while \cite{zeng2025scalable} adapts the algorithm from the paper as part of ablation study.

        The paper~\cite{pu2025rome} says that this paper is the first to incorporate gradient information into the framework. Similarly, the paper~\cite{puhipo} says that this paper incorporate gradient information. This is correct, but it is important to note that such gradient information is used in a non-gradient descent manner. 

        The papers~\cite{pu2025coco, wang2025computing, genglamplace, salva2025denoising, baigraph, liu2025apollo, clarke2025learning, lihypergraph} just mention this paper in passing.
        
    \paragraph{ICLR 2025 Peer Review}

    The official reviews of~\cite{geng2025differentiable} is publicly available as an ICLR 2025 peer-review process.
    \begin{itemize}
        \item Reviewer y4sA says that the paper optimizes over a loss function.
        \item Reviewer FT2F says that the paper creates a Lagrangian loss and runs gradient of such loss.
        \item Reviewer LD6k and NhwL say that the paper uses gradient descent.
    \end{itemize}
  
    All four comments are incorrect: this paper proposes a loss function, but does not proceed to optimize over such loss function. In more than one dimension, the algorithm uses a non-gradient dynamic, so it cannot be rationalized as a gradient descent of any loss function.\footnote{as long as we assume that loss has to be absolutely continuous which is a common assumption.}

{\small
\bibliographystyle{alpha}
\bibliography{bibliography}}

@inproceedings{
geng2025differentiable,
title={Differentiable Integer Linear Programming},
author={Zijie Geng and Jie Wang and Xijun Li and Fangzhou Zhu and Jianye HAO and Bin Li and Feng Wu},
booktitle={The Thirteenth International Conference on Learning Representations},
year={2025},
url={https://openreview.net/forum?id=FPfCUJTsCn}
}

@article{pu2025coco,
  title={CoCo-MILP: Inter-Variable Contrastive and Intra-Constraint Competitive MILP Solution Prediction},
  author={Pu, Tianle and Li, Jianing and Gao, Yingying and Liu, Shixuan and Geng, Zijie and Liu, Haoyang and Chen, Chao and Fan, Changjun},
  journal={arXiv preprint arXiv:2511.09209},
  year={2025}
}

@inproceedings{wang2025computing,
  title={Computing circuits optimization via model-based circuit genetic evolution},
  author={Wang, Zhihai and Wang, Jie and Xia, Xilin and Zuo, Dongsheng and Chen, Lei and Ma, Yuzhe and Hao, Jianye and Yuan, Mingxuan and Wu, Feng},
  booktitle={The Thirteenth International Conference on Learning Representations},
  year={2025}
}

@article{do2025hephaestus,
  title={Hephaestus: Mixture Generative Modeling with Energy Guidance for Large-scale QoS Degradation},
  author={Do, Nguyen and Ngo, Bach and Kashuv, Youval and Pham, Canh V and Tong, Hanghang and Thai, My T},
  journal={arXiv preprint arXiv:2510.17036},
  year={2025}
}

@article{zeng2025scalable,
  title={Scalable Mixed-Integer Optimization with Neural Constraints via Dual Decomposition},
  author={Zeng, Shuli and Zhang, Sijia and Wu, Feng and Tang, Shaojie and Li, Xiang-Yang},
  journal={arXiv preprint arXiv:2511.09186},
  year={2025}
}

@inproceedings{genglamplace,
  title={LaMPlace: Learning to optimize cross-stage metrics in macro placement},
  author={Geng, Zijie and Wang, Jie and Liu, Ziyan and Xu, Siyuan and Tang, Zhentao and Kai, Shixiong and Yuan, Mingxuan and Hao, Jianye and Wu, Feng},
  booktitle={The Thirteenth International Conference on Learning Representations}
}

@phdthesis{salva2025denoising,
  title={Denoising Diffusion-Based Evolutionary Algorithms: Exploring Hybridizations of Evolutionary Algorithms with Denoising Diffusion Models},
  author={Salva, Joan},
  year={2025},
  school={Technische Universit{\"a}t Wien}
}

@inproceedings{baigraph,
  title={A Graph Enhanced Symbolic Discovery Framework For Efficient Logic Optimization},
  author={Bai, Yinqi and Wang, Jie and Chen, Lei and Wang, Zhihai and Kuang, Yufei and Yuan, Mingxuan and Hao, JianYe and Wu, Feng},
  booktitle={The Thirteenth International Conference on Learning Representations}
}

@article{liu2025apollo,
  title={Apollo-MILP: An alternating prediction-correction neural solving framework for mixed-integer linear programming},
  author={Liu, Haoyang and Wang, Jie and Geng, Zijie and Li, Xijun and Zong, Yuxuan and Zhu, Fangzhou and Hao, Jianye and Wu, Feng},
  journal={arXiv preprint arXiv:2503.01129},
  year={2025}
}

@article{pu2025rome,
  title={RoME: Domain-robust mixture-of-experts for MILP solution prediction across domains},
  author={Pu, Tianle and Geng, Zijie and Liu, Haoyang and Liu, Shixuan and Wang, Jie and Zeng, Li and Chen, Chao and Fan, Changjun},
  journal={arXiv preprint arXiv:2511.02331},
  year={2025}
}

@article{bengio2013estimating,
  title={Estimating or propagating gradients through stochastic neurons for conditional computation},
  author={Bengio, Yoshua and L{\'e}onard, Nicholas and Courville, Aaron},
  journal={arXiv preprint arXiv:1308.3432},
  year={2013}
}

@article{stewart2023differentiable,
  title={Differentiable clustering with perturbed spanning forests},
  author={Stewart, Lawrence and Bach, Francis and Llinares-L{\'o}pez, Felipe and Berthet, Quentin},
  journal={Advances in Neural Information Processing Systems},
  volume={36},
  pages={31158--31176},
  year={2023}
}

@article{karalias2020erdos,
  title={Erdos goes neural: an unsupervised learning framework for combinatorial optimization on graphs},
  author={Karalias, Nikolaos and Loukas, Andreas},
  journal={Advances in Neural Information Processing Systems},
  volume={33},
  pages={6659--6672},
  year={2020}
}

@article{sornwanee20251,
  title={1-Dimensional Normal Competitive Market Equilibrium},
  author={Sornwanee, Thanawat},
  journal={arXiv preprint arXiv:2505.08425},
  year={2025}
}

@article{jalota2024simple,
  title={When Simple is Near Optimal in Security Games},
  author={Jalota, Devansh and Ostrovsky, Michael and Pavone, Marco},
  journal={arXiv preprint arXiv:2402.11209},
  year={2024}
}

@book{ryu2022large,
  title={Large-scale convex optimization: algorithms \& analyses via monotone operators},
  author={Ryu, Ernest K and Yin, Wotao},
  year={2022},
  publisher={Cambridge University Press}
}

@article{clarke2025learning,
  title={Learning-Based Hierarchical Approach for Fast Mixed-Integer Optimization},
  author={Clarke, Stefan and Stellato, Bartolomeo},
  journal={arXiv preprint arXiv:2512.03547},
  year={2025}
}

@article{lihypergraph,
  title={Hypergraph Neural Network for Integer Programming with High-Degree Terms},
  author={Li, Minshuo and Wu, Yaoxin and Troubil, Pavel and Zhang, Yingqian and Nuijten, Wim PM}
}

@article{puhipo,
  title={HiPO-MILP: Hierarchical Preference Optimization for MILP Solving},
  author={Pu, Tianle and Liu, Shixuan and Duan, Haopeng and Liu, Haoyang and Li, Jianing and Geng, Zijie and Wang, Jie and Cheng, Guangquan and Chen, Chao and Fan, Changjun}
}

@article{li2025deft,
  title={DEFT: Differentiable Automatic Test Pattern Generation},
  author={Li, Wei and Zou, Yan and Liang, Yixin and Moura, Jos{\'e} and Blanton, Shawn},
  journal={arXiv preprint arXiv:2512.23746},
  year={2025}
}

\appendix
\section{How Theorem 3 of~\cite{geng2025differentiable} is Incorrect}
\label{appendix:theorem3isnotcorrect}

    The paper~\cite{geng2025differentiable} has the following theorem:
    \begin{incorrect_theorem}
    \label{theorem3}
        [Restated Theorem 3 of~\cite{geng2025differentiable}]
        There exists a positive scalar $\mu^* > 0$ such that, for any $\mu > \mu^*$,
        \begin{align*}
            \argmin_{\hat{x} \in [0,1]^d} c^\top \hat{x} + \mu \sum_{i=1}^m \mexpect{x \sim P_{\hat{x}}}{\max\oo{\left\{a_i^\top x-b_i, 0\right\}}}
            \subseteq
            \argmin_{\hat{x} \in [0,1]^d: \sum_{i=1}^m\mexpect{x \sim P_{\hat{x}}}{\max\oo{\left\{a_i^\top x-b_i, 0\right\}}} = 0} c^\top \hat{x}.
        \end{align*}
    \end{incorrect_theorem}
    
    It is easy to see that, for any $\mu > 0$, the set
    \begin{align*}
        \argmin_{\hat{x} \in [0,1]^d} c^\top \hat{x} + \mu \mexpect{x \sim P_{\hat{x}}}{\max\oo{\left\{a_i^\top x-b_i, 0\right\}}} \ne\emptyset.
    \end{align*}
    This happens because the objective function is continuous\footnote{and is, in fact, infinitely differentiable} and the search space $\cc{0,1}^d$ is closed and compact.

    However, the solution sets to the problem~\ref{eq:secondproblem}, which is $\argmin_{\hat{x} \in [0,1]^d: \mexpect{x \sim P_{\hat{x}}}{\max\oo{\left\{a_i^\top x-b_i, 0\right\}} =0}}$ can be empty. 

    Thus, we the theorem 3 of the paper is incorrect.

    \subsection{Incorrect Proof in the Paper}
    \label{subsection:incorrectproof3}

        The proof, which is in the appendix of the original paper, is not correct. It consider an arbitrary feasible solution to the problem~\ref{eq:thirdproblem}\footnote{Denoted as (P3) in the paper}\footnote{The feasible solution to the problem~\ref{eq:thirdproblem} is any element of $[0,1]^d$.} as $\hat{x}$, and claims that
        \begin{align*}
            \phi\oo{\hat{x}} := \sum_{i=1}^m = 0
        \end{align*}
        by feasibility. However, this only holds when $\hat{x}$ is a feasible solution to the problem~\ref{eq:secondproblem} and not the problem~\ref{eq:thirdproblem}.

        Thus, the proof requires the existence of a feasible solution to the problem~\ref{eq:secondproblem}.

\end{document}